\newcommand{\N}{\mathbb N}
\newcommand{\Q}{\mathbb Q}
\newcommand{\C}{\mathbb C}
\newcommand{\mc}{\mathcal}
\renewcommand{\phi}{\varphi}
\renewcommand{\ge}{\geqslant}
\renewcommand{\le}{\leqslant}
\theoremstyle{plain}
\newtheorem{theorem}{Theorem}[section]
\newtheorem*{theorem*}{Theorem}
\newtheorem{lemma}[theorem]{Lemma}
\newtheorem{corollary}[theorem]{Corollary}
\newtheorem{proposition}[theorem]{Proposition}
\newtheorem*{question}{Question}
\theoremstyle{definition}
\newtheorem{remark}[theorem]{Remark}
\newtheorem{definition}[theorem]{Definition}
\newtheorem{example}[theorem]{Example}
\begin{document}

\date{}

\title{On face numbers of flag simplicial complexes}
\author{Yury Ustinovskiy}

\address{Department of Mathematics, Princeton University}
\email{yuryu@math.princeton.edu}

\begin{abstract}
Denham and Suciu in~\cite{de-su-07} and Panov and Ray in~\cite{pa-ra-08} computed ranks of homotopy groups and Poincar\'e series of a moment-angle-complex $\mc Z(\mc K$) / Davis-Januzskiewicz space $DJ(\mc K)$ associated to a flag simplicial complex $\mc K$. In this note we revisit these results and interpret them as polynomial bounds on the face numbers of an arbitrary simplicial flag complex.
\end{abstract}
\maketitle

\section*{Introduction}
Let $\mc K$ be an $n$-dimensional simplicial complex and denote by $f_i$ the number of $i$-dimensional simplices of $\mc K$. Characterization of possible $f$-vectors $(f_0,\dots,f_n)$ of various classes of simplicial complexes is a classical problem of enumerative combinatorics. We mention several results in this direction: 
\begin{enumerate}
\item Kruskal-Katona theorem~\cite[II.1]{st-96}, describing all possible $f$-vectors of all simplicial complexes;
\item analogue of Kruskal-Katona theorem for Cohen-Macaulay simplicial complexes~\cite[II.2]{st-96};
\item Upper Bound Theorem due to McMullen~\cite[II.3.4]{st-96}, giving necessary conditions on $f$-vector for triangulations of $n$-dimensional spheres.
\item $g$-Theorem, characterizing $f$-vectors of simplicial polytopes, see~\cite[II.6.2]{st-96}.  
\end{enumerate}

Proofs of these results led to numerous constructions, associating to combinatorial objects (simplicial complexes, triangulations of spheres, polytopes, etc) certain algebraic and topological objects. These constructions allow to employ methods of homological algebra, algebraic geometry and algebraic topology in purely combinatorial problems.

In a similar spirit, in this note we derive a series of inequalities on $f$-vectors of a flag simplicial complex. Characterization of $f$-vectors of flag simplicial complex, or, equivalently \emph{clique vectors} of \emph{simple graphs}, is a well-studied problem with many partial results. In~\cite{zy-49} Zykov gave a generalization of Tur\'{a}n's theorem for graphs, Razborov~\cite{ra-08} proved asymptotic bounds on the component of $f_2$ in terms of $f_1$ and $f_0$. Herzog et al.~\cite{he-08} gave complete characterization for all possible $f$-vectors of 
\emph{chordal} flag simplicial complexes. Goodarzi~\cite{go-15} generalized this result for $k$-connected chordal simplicial complexes.

Our proof uses results of Denham, Suciu~\cite{de-su-07} and Panov, Ray~\cite{pa-ra-08}, where the authors relate Poincar\'e series of a face ring of a flag simplical complex to a Poincar\'e series of a free graded algebra. The main result can be formulated as follows

\begin{theorem}
Let $\mc K$ be a flag simplicial complex with $f$-vector $(f_0,\dots,f_n)$.  Then for any $N\ge 1$ we have
\begin{equation}
(-1)^{N}\sum_{d|N}\mu(N/d) (-1)^d p_d(\underline{\alpha})\ge 0,
\end{equation}
where $p_d$ is $d$-th Newton polynomial expressed in elementary symmetric polynomials $\underline{\alpha}=(\alpha_1,\alpha_2,\dots)$ with
\[
\alpha_n:=\sum_{i=0}^{n-1} f_i\binom{n-1}{i}.
\]
\end{theorem}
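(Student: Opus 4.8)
The plan is to recognize $\underline\alpha$ as the sequence of Taylor coefficients of the Hilbert series of the face ring, and then to exploit that for a flag complex this ring is Koszul with a Koszul dual that is a universal enveloping algebra, hence whose Hilbert series has a product expansion with \emph{non-negative} exponents. Concretely, set $F(\mc K,t):=\sum_{\sigma\in\mc K}\bigl(t/(1-t)\bigr)^{|\sigma|}$, the Hilbert series of the Stanley--Reisner ring $\Q[\mc K]$ with all variables in degree $1$. Expanding $\bigl(t/(1-t)\bigr)^{j}=\sum_{m\ge j}\binom{m-1}{j-1}t^{m}$ and summing over faces shows $[t^{n}]F(\mc K,t)=\alpha_{n}$ for $n\ge 1$ (and $[t^{0}]F=1$). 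Since $F(\mc K,t)=\sum_{k\ge 0}\alpha_{k}t^{k}$ with $\alpha_{0}=1$, Newton's identities in generating-function form read $t\,\frac{d}{dt}\log F(\mc K,t)=\sum_{d\ge1}(-1)^{d-1}p_{d}(\underline\alpha)\,t^{d}$, which both pins down the $p_{d}(\underline\alpha)$ of the statement and yields
\[
t\,\frac{d}{dt}\log\frac{1}{F(\mc K,t)}=\sum_{d\ge1}(-1)^{d}p_{d}(\underline\alpha)\,t^{d}.
\]

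The flag hypothesis enters exactly once. By Fröberg's theorem $\Q[\mc K]$ is Koszul because $\mc K$ is flag, and --- this is the content of~\cite{de-su-07,pa-ra-08} --- its Koszul dual is realized as the loop homology $H_{*}(\Omega DJ(\mc K);\Q)$, whose Poincar\'e series is therefore $1/F(\mc K,-t)$. As a connected graded cocommutative Hopf algebra over $\Q$, $H_{*}(\Omega DJ(\mc K);\Q)$ is the universal enveloping algebra $U(L)$ of the rational homotopy Lie algebra $L=\pi_{*}\bigl(\Omega DJ(\mc K)\bigr)\otimes\Q$ (Milnor--Moore), so the Poincar\'e--Birkhoff--Witt theorem gives, with $b_{i}:=\dim_{\Q}L_{i}\ge 0$,
\[
\frac{1}{F(\mc K,-t)}=\prod_{i\ \mathrm{odd}}(1+t^{i})^{b_{i}}\prod_{i\ \mathrm{even}}(1-t^{i})^{-b_{i}},
\]
and after $t\mapsto-t$ this becomes $1/F(\mc K,t)=\prod_{i\ge 1}(1-t^{i})^{-(-1)^{i}b_{i}}$.

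Next I would take the logarithmic derivative of the last product, obtaining $t\,\frac{d}{dt}\log\frac{1}{F(\mc K,t)}=\sum_{i\ge1}(-1)^{i}i\,b_{i}\sum_{j\ge1}t^{ij}=\sum_{N\ge1}\bigl(\sum_{i\mid N}(-1)^{i}i\,b_{i}\bigr)t^{N}$. Comparing with the first display gives $\sum_{i\mid N}(-1)^{i}i\,b_{i}=(-1)^{N}p_{N}(\underline\alpha)$ for every $N\ge 1$. This is a relation of the shape $\sum_{i\mid N}h(i)=(-1)^{N}p_{N}(\underline\alpha)$ with $h(i)=(-1)^{i}i\,b_{i}$, so Möbius inversion gives $(-1)^{N}N\,b_{N}=h(N)=\sum_{d\mid N}\mu(N/d)(-1)^{d}p_{d}(\underline\alpha)$, and multiplying through by $(-1)^{N}$ yields precisely
\[
(-1)^{N}\sum_{d\mid N}\mu(N/d)(-1)^{d}p_{d}(\underline\alpha)=N\,b_{N}=N\dim_{\Q}\pi_{N}\bigl(\Omega DJ(\mc K)\bigr)\otimes\Q\ \ge\ 0,
\]
which is the assertion.

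The symmetric-function and Möbius bookkeeping is routine; I expect the substantive point to be the middle step. There one must correctly invoke~\cite{de-su-07,pa-ra-08}, match the several gradings in play (on $\Q[\mc K]$, on $DJ(\mc K)$, and on $\Omega DJ(\mc K)$, the variable $t$ throughout tracking the degree-one grading for which $\Q[\mc K]$ is Koszul), and recognize that the positivity in the theorem issues \emph{entirely} from the PBW product expansion of a universal enveloping algebra having non-negative exponents $b_{i}$ --- namely the ranks of the homotopy groups of $\Omega DJ(\mc K)$. All the generating-function identities above are to be read as identities of formal power series in $t$.
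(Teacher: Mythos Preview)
Your proof is correct and follows essentially the same route as the paper: identify the $\alpha_n$ as the coefficients of the Hilbert series of $\Q[\mc K]$, invoke the Panov--Ray/Denham--Suciu computation to write the Poincar\'e series of the loop space as $1/F(\mc K,-t)$, use that this series admits a product expansion with non-negative exponents coming from the ranks of $\pi_*(\Omega DJ(\mc K))$, and then take logarithms and M\"obius-invert. The only cosmetic difference is that the paper works on the cohomology side, using that $H^\bullet(\Omega DJ(\mc K);\Q)$ is a free graded-commutative algebra on the graded vector space dual to $\pi_\bullet(\Omega DJ(\mc K))\otimes\Q$, whereas you work on the homology side via Milnor--Moore and PBW for $U(L)$; the resulting product formulas and the exponents $b_i=v_i=\dim_\Q \pi_i(\Omega DJ(\mc K))\otimes\Q$ coincide, so the arguments are interchangeable.
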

Here $\mu(n)$ is the M\"obius function
\[
\mu(n)=
\begin{cases}
(-1)^k, &  \mbox{if $n=p_1p_2\dots p_k$, is the prime decomposition of $n$ with distinct factors};\\
0, & \mbox{otherwise},
\end{cases}
\]
and the summation is taken over all positive integers $d$ dividing $N$.

The rest of the paper is organized as follows. In Section~\ref{sec:free_alg} we recall standard lower bounds on the dimensions of the graded components of a graded free algebra. In Section~\ref{sec:toric} we discuss basic notions of toric topology and recall results of Denhem-Suciu and Panov-Ray. Finally in Section~\ref{sec:main} we prove our main result.

\section{Free graded algebras}\label{sec:free_alg}

Throughout this paper we work with algebras over a field $k$ of characteristic zero, which are
\begin{enumerate}
\item (graded) $A=\sum^\infty_{i=0} A^i$ with  $A^i\cdot A^j\subset A^{i+j}$;
\item (locally finite dimensional) $\dim_k A^i<\infty$;
\item (commutative) $a\!\cdot\! b=(-1)^{ij}\,b\!\cdot\! a$ for $a\in A^i$, $b\in A^j$;
\item (connected) $A^0=\langle\mathbf 1\rangle_k$.
\end{enumerate}

The aim of this section is to derive certain identities involving dimensions of graded components of a free graded algebra $A^\bullet$. In different contexts similar identities appeared, e.g., in~\cite[Ch.\,3]{ba-86}, see also references therein. As a corollary of these identities we get lower bounds on $\dim A^N$, $N\in\N$.

\begin{definition}
Let $V^\bullet=\sum_{i\ge 1} V^i$ be a graded vector space over a ground field $k$. Assume that $\dim_k V^i<\infty$ for $i\ge 1$. \emph{Free graded commutative algebra} generated by $V$ is the algebra
\[
S^\bullet(V):=\bigotimes_{i=2k} Sym^\bullet(V^i)\otimes \bigotimes_{i=2k+1}\Lambda^\bullet(V^i),
\]
where $Sym^\bullet(V^{2k})$ and $\Lambda^\bullet(V^{2k+1})$ are symmetric and exterior graded algebras generated by $V^{2k}$ and $V^{2k+1}$ respectively.
\end{definition}

\begin{definition}
For a graded vector space $V^\bullet=\sum_{i\ge 0} V^i$ the \emph{Poincar\'e series} of $V^\bullet$ is a formal power series
\[
h(V^\bullet;t):=\sum_{i} \dim_k V^i t^i.
\]
For a graded algebra $A^\bullet$ its Poicar\'e series $h(A^\bullet;t)$ is the Poincar\'e of the underlying graded vector space.
\end{definition}

\begin{proposition}\label{prop:poincare_free}
Let $V^{\bullet}=\sum_{i\ge 1} V^i$ be a graded vector space. Let $h(V^\bullet;t)=v_1t+v_2t^2+\dots$ be the Poincar\'e series of $V^\bullet$. Then for the free graded algebra generated by $V$ one has:
\begin{equation}\label{eq:poincare_free}
h(S^\bullet(V);t)=\prod_{i=2k}(1-t^{2k})^{-v_{2k}}\prod_{i=2k+1}(1+t^{2k+1})^{v_{2k+1}}= \prod_{i}(1-(-t)^i)^{(-1)^{i+1}v_i}.
\end{equation}
\end{proposition}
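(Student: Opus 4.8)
The plan is to reduce the computation to a vector space concentrated in a single degree and then exploit multiplicativity of the Poincar\'e series under tensor products. First I would record the elementary fact that for graded, locally finite, connected algebras $A^\bullet$ and $B^\bullet$ one has $h(A^\bullet\otimes B^\bullet;t)=h(A^\bullet;t)\,h(B^\bullet;t)$: indeed $(A\otimes B)^n=\bigoplus_{p+q=n}A^p\otimes B^q$, so $\dim_k(A\otimes B)^n=\sum_{p+q=n}\dim_k A^p\dim_k B^q$ is precisely the coefficient of $t^n$ in the product. The same identity persists for the tensor product over the infinitely many degrees appearing in the definition of $S^\bullet(V)$: since $V^i$ contributes only in degrees $\ge i$, only finitely many tensor factors affect any fixed graded piece, the infinite product of Poincar\'e series converges $t$-adically in $k[[t]]$, and it computes $h(S^\bullet(V);t)$ factor by factor.

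Next I would treat a single homogeneous generator space. If $W$ is a $d$-dimensional vector space placed in an even degree $i=2k$, then $Sym^m(W)$ sits in degree $2km$ and has dimension $\binom{d+m-1}{m}$, so by the negative binomial series $h(Sym^\bullet(W);t)=\sum_{m\ge 0}\binom{d+m-1}{m}t^{2km}=(1-t^{2k})^{-d}$. If instead $W$ is $d$-dimensional in an odd degree $i=2k+1$, then $\Lambda^m(W)$ sits in degree $(2k+1)m$ with dimension $\binom{d}{m}$, whence $h(\Lambda^\bullet(W);t)=\sum_{m=0}^{d}\binom{d}{m}t^{(2k+1)m}=(1+t^{2k+1})^{d}$. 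Applying this with $W=V^i$, so $d=v_i$, and multiplying over all $i\ge 1$ using the multiplicativity above yields the middle expression in~\eqref{eq:poincare_free}.

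Finally I would rewrite this as the uniform product on the right of~\eqref{eq:poincare_free}: for even $i$ one has $(-t)^i=t^i$ and $(-1)^{i+1}=-1$, so the $i$-th factor is $(1-t^i)^{-v_i}$, while for odd $i$ one has $1-(-t)^i=1+t^i$ and $(-1)^{i+1}=1$, so the $i$-th factor is $(1+t^i)^{v_i}$ — exactly the factors just computed.

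I do not anticipate a serious obstacle; the only point requiring a word of care is the passage to the infinite tensor product, i.e. verifying that $\prod_i(1-(-t)^i)^{(-1)^{i+1}v_i}$ makes sense in $k[[t]]$ and genuinely computes the Poincar\'e series coefficient by coefficient. This is precisely where local finiteness of $V^\bullet$ and the grading bound ($V^i$ supported in degrees $\ge i$) are used. Everything else is the two standard generating-function identities for symmetric and exterior powers.
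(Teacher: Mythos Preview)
Your proof is correct and follows essentially the same strategy as the paper: compute the Poincar\'e series of the symmetric (resp.\ exterior) algebra on generators of a fixed degree, then invoke multiplicativity of $h(\cdot;t)$ under tensor products. The only cosmetic difference is that the paper reduces further to one-dimensional generator spaces (so the base case is $1/(1-t^{2k})$ and $1+t^{2k+1}$), whereas you handle each $V^i$ as a $v_i$-dimensional block via the binomial identities; you are also more explicit about the $t$-adic convergence of the infinite product, which the paper leaves tacit.
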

\begin{proof}
For a one dimensional $V^\bullet=V^{2k}$ the corresponding free algebra $S^\bullet(V^{2k})$ is a polynomial algebra with one generator in degree $2k$, hence
\[
h(S(V^{2k});t)=1+t^{2k}+t^{4k}+\dots=\frac{1}{1-t^{2k}}.
\]
Similarly for a one dimensional $V^{2k+1}$
\[
h(S(V^{2k+1});t)=1+t^{2k+1}.
\]
These identities together with the multiplicativity of $h(\cdot;t)$ with respect to tensor products imply formula~\eqref{eq:poincare_free}.
\end{proof}

Let $\underline{s}=\{s_1,s_2,\dots\}$ be an arbitrary sequence of indeterminates. Fix some integer $N>1$ and introduce new variables $\gamma_1,\dots,\gamma_N$ such that $s_i=\sigma_i(\gamma_1,\dots,\gamma_N)$, $i\le N$ is the elementary symmetric polynomial in $\{\gamma_k\}_{k=1}^N$. We denote by $p_i(\underline{s})$ the unique presentation of the $i$-th Newton polynomial $\gamma_1^i+\dots+\gamma_N^i$ as a polynomial in $\{s_1,\dots,s_N\}$. It is easy to check that for a fixed $i$ this presentation does not depend on the choice of $N>i$. The first few polynomials $p_i(\underline{s})$ are:
\begin{equation*}
\begin{split}
p_1(\underline{s}) = & s_1\\
p_2(\underline{s}) = & s_1^2 - 2s_2\\
p_3(\underline{s}) = & s_1^3 - 3s_2s_1 + 3s_3\\
p_4(\underline{s}) = & s_1^4 - 4s_2s_1^2+4s_3s_1+2s_2^2-4s_4.\\
\end{split}
\end{equation*}
It is natural to assign gradings $\deg s_i=i$, then $p_i(\underline s)$ is homogeneous polynomial of degree $i$. We also point out that coefficient at $s_i$ in $p_i(\underline s)$ is $(-1)^{i+1}i$.
\medskip

The following lemma gives a \emph{converse} of Proposition~\ref{prop:poincare_free}.
\begin{lemma}\label{lm:mobius_inversion}
Let $V^{\bullet}=\sum_{i\ge 1} V^i$ be a graded vector space. Let $h(S^\bullet(V); t)=1+s_1 t+ s_2 t^2+\dots$ be the Poincar\'e series of the corresponding free algebra. Then the dimensions of graded components of $V^\bullet$ are given by the formulas:
\begin{equation}\label{eq:mobius_inversion}
\dim_k V^N=\frac{(-1)^{N+1}}{N}\sum_{d|N}\mu(N/d) p_d(\underline{s}),
\end{equation}
where $\underline{s}:=\{s_1,s_2,\dots\}$, $\mu(n)$ is the M\"obius function
\[
\mu(n)=
\begin{cases}
(-1)^k, &  \mbox{if $n=p_1p_2\dots p_k$, is the prme decomposition of $n$ with distinct factors};\\
0, & \mbox{otherwise},
\end{cases}
\]
and the summation is taken over all positive integers $d$ dividing $N$.
\end{lemma}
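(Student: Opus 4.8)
The plan is to invert the product formula~\eqref{eq:poincare_free} by taking a logarithmic derivative and extracting coefficients, then recognize the result as a Möbius inversion over the divisor lattice. First I would set $f(t) := h(S^\bullet(V);t) = 1 + s_1 t + s_2 t^2 + \cdots$ and apply $\log$ to the right-hand side of~\eqref{eq:poincare_free}, obtaining $\log f(t) = \sum_{i\ge 1} (-1)^{i+1} v_i \log\bigl(1 - (-t)^i\bigr)$. Expanding $\log(1-x) = -\sum_{m\ge 1} x^m/m$ with $x = (-t)^i$ gives
\[
\log f(t) = -\sum_{i\ge 1}\sum_{m\ge 1} \frac{(-1)^{i+1} v_i}{m} (-1)^{im} t^{im}
= \sum_{n\ge 1}\left(\sum_{i\mid n} \frac{i}{n}(-1)^{i} (-1)^{n} v_i\right) t^n,
\]
where I have substituted $m = n/i$ and noted $(-1)^{i+1}(-1)^{im} = (-1)^{i+1}(-1)^{n}(-1)^{i(m-1)}$; a careful sign bookkeeping reduces the inner coefficient to $\tfrac{(-1)^{n+1}}{n}\sum_{i\mid n} i\,(-1)^{i+1} v_i \cdot(\text{sign})$ — the one genuinely fiddly point, which I will handle by checking parities of $i$ and $n/i$ separately.

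Next I would connect the coefficients of $t\,\frac{d}{dt}\log f(t)$, equivalently of $\log f(t)$, to the Newton polynomials $p_d(\underline s)$. Writing $f(t) = \prod_j (1 - (-t)\beta_j)^{-1}\cdots$ abstractly, or more directly using the classical identity relating $\log\bigl(\sum s_i t^i\bigr)$ to power sums: if formally $1 + s_1 t + s_2 t^2 + \cdots = \prod (1 + \gamma_j t)$ then $\log f(t) = \sum_j \log(1+\gamma_j t) = \sum_{d\ge 1} \frac{(-1)^{d+1}}{d} p_d(\underline s)\, t^d$, since $p_d(\underline s) = \sum_j \gamma_j^d$ by the very definition of $p_d$ given before the lemma. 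Matching the $t^N$ coefficient from this expression with the $t^N$ coefficient computed in the previous paragraph yields
\[
\frac{(-1)^{N+1}}{N} p_N(\underline s) = \sum_{d\mid N} (\pm) \frac{d}{N} \dim_k V^d,
\]
i.e., after clearing the $1/N$, an identity of the shape $a_N = \sum_{d\mid N} b_d$ where $a_N$ is a signed multiple of $p_N(\underline s)$ and $b_d$ a signed multiple of $d\dim_k V^d$.

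Finally I would apply Möbius inversion over the divisor poset: from $a_N = \sum_{d\mid N} b_d$ we get $b_N = \sum_{d\mid N}\mu(N/d)\, a_d$, which upon unwinding the signs and the factor of $d$ gives exactly formula~\eqref{eq:mobius_inversion}. I would also double-check the base case $N=1$ (giving $\dim_k V^1 = s_1$, consistent with $p_1(\underline s) = s_1$) and $N=2$ (giving $\dim_k V^2 = -\tfrac12(s_1^2 - 2s_2) + \tfrac12 s_1^2 = s_2 - \binom{s_1}{2}$, matching the count of degree-2 relations among $s_1$ generators) to make sure every sign is correct. The main obstacle is purely bookkeeping: tracking the interacting signs $(-1)^{i+1}$ from Proposition~\ref{prop:poincare_free}, the $(-1)^{im}$ from substituting $(-t)^i$, and the $(-1)^{d+1}$ in the Newton-polynomial expansion of $\log(1+\gamma_j t)$, all simultaneously — I expect to organize this by splitting into cases according to whether $N$ is even or odd and whether $d\mid N$ contributes an even or odd cofactor $N/d$.
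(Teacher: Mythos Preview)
Your proposal is correct and follows essentially the same route as the paper: take $\log$ of identity~\eqref{eq:poincare_free}, expand the left side via the formal factorization $1+s_1t+\cdots=\prod(1+\gamma_j t)$ to get $\sum_N (-1)^{N+1}p_N(\underline s)t^N/N$, expand the right side as a power series, equate coefficients of $t^N$, and M\"obius-invert. The sign bookkeeping is actually simpler than you fear---since $im=N$ one has $(-1)^{im}=(-1)^N$ uniformly, so no parity case split is needed and you land directly on the paper's clean intermediate identity $\sum_{i\mid N}(-1)^i i\,v_i=-p_N(\underline s)$; also note a small slip in your $N=2$ check, where $+\tfrac12 p_1$ should be $+\tfrac12 s_1$, not $+\tfrac12 s_1^2$.
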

\begin{proof}
Taking logarithm of the identity~\eqref{eq:poincare_free} we get
\begin{equation}\label{eq:log1}
\log(1+s_1t+s_2t^2+\dots)=\sum_i (-1)^{i+1}v_i\log(1-(-t)^i).
\end{equation}
Now let us fix any integer $N$ and formally expand
\[
1+s_1t+\dots+s_Nt^N=\prod_{i=1}^N(1+\gamma_i t),
\]
where $\{\gamma_i\}$ are new variables. Then in $k[s_1,\dots,s_N][[t]]/t^{N+1}\subset k[\gamma_1,\dots,\gamma_N][[t]]/t^{N+1}$ we have
\[
\log(1+s_1t+s_2t^2+\dots)=\sum^N_{i=1}\log(1+\gamma_it)=\sum_{i=1}^N (-1)^{i+1}\frac{p_i(\underline{s})}{i}t^i.
\]
Taking limit $N\to\infty$, we get an identity in $k[s_1,s_2,\dots][[t]]$
\[
\log (1+s_1t+s_2t^2+\dots)=\sum_N (-1)^{N+1}\frac{p_N(\underline{s})}{N}t^N.
\]
After expanding the power series $\log(1-(-t)^i)$ in the RHS of~\eqref{eq:log1}, and comparing the coefficients of $t^N$, we conclude that
\[
\sum_{i|N} (-1)^i iv_i=-p_N(\underline{s}).
\]
Finally, with the use of M\"obius inversion formula we get
\[
v_N=\frac{(-1)^{N+1}}{N}\sum_{d|N}\mu(N/d) p_d(\underline{s}),
\]
as stated.
\end{proof}

Lemma~\ref{lm:mobius_inversion} gives a necessary condition for a sequence of integers $\{1,s_1,s_2,\dots\}$ to be the dimensions of the graded components of a free graded algebra $S^\bullet(V)$. 

\begin{corollary}\label{cor:ineq_free_alg}
If $1+s_1t+s_2t^2+\dots$ is the Poincar\'e series of a free graded algebra $S^\bullet(V)$, then for any integer $N\ge 1$ the sequence $\underline{s}=\{s_1,s_2\dots\}$ satisfies

\begin{equation}\label{ineq:free_alg}
(-1)^{N+1}\sum_{d|N}\mu(N/d) p_d(\underline{s})\ge 0.
\end{equation}

\end{corollary}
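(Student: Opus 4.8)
The plan is to deduce Corollary~\ref{cor:ineq_free_alg} directly from Lemma~\ref{lm:mobius_inversion} by observing that the left-hand side of~\eqref{ineq:free_alg} is, up to the positive factor $N$, precisely the dimension $\dim_k V^N$ of a graded vector space, and hence is a nonnegative integer. Concretely, suppose $1+s_1t+s_2t^2+\dots$ is the Poincar\'e series of a free graded algebra $S^\bullet(V)$ for some graded vector space $V^\bullet=\sum_{i\ge 1}V^i$ with $\dim_k V^i<\infty$. Applying formula~\eqref{eq:mobius_inversion} of Lemma~\ref{lm:mobius_inversion}, we obtain
\[
\dim_k V^N=\frac{(-1)^{N+1}}{N}\sum_{d|N}\mu(N/d)\,p_d(\underline{s}).
\]
Since $\dim_k V^N$ is a dimension of a vector space it satisfies $\dim_k V^N\ge 0$, and since $N>0$ we may multiply through by $N$ to arrive exactly at inequality~\eqref{ineq:free_alg}.

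There is essentially no obstacle here: the entire content has already been packaged into Lemma~\ref{lm:mobius_inversion}, and the corollary is the trivial remark that a vector-space dimension cannot be negative. The only point worth stating carefully is the hypothesis: we are assuming at the outset that the given power series $1+s_1t+s_2t^2+\dots$ \emph{is} realized as $h(S^\bullet(V);t)$ for some $V^\bullet$, so that Lemma~\ref{lm:mobius_inversion} is applicable and its output $v_N$ genuinely equals a dimension. Under that hypothesis the conclusion is immediate.

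If one wanted to be slightly more self-contained, one could also note that $\dim_k V^N\in\Z_{\ge 0}$, so in fact the stronger divisibility-and-sign statement holds: $N$ divides $\sum_{d|N}\mu(N/d)p_d(\underline{s})$ and the quotient has sign $(-1)^{N+1}$. For the purposes of the $f$-vector application only the nonnegativity~\eqref{ineq:free_alg} is needed, so I would simply record that and move on.

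\begin{proof}
By Lemma~\ref{lm:mobius_inversion}, if $1+s_1t+s_2t^2+\dots$ is the Poincar\'e series of the free graded algebra $S^\bullet(V)$ generated by a graded vector space $V^\bullet=\sum_{i\ge 1}V^i$, then
\[
\dim_k V^N=\frac{(-1)^{N+1}}{N}\sum_{d|N}\mu(N/d)\,p_d(\underline{s}).
\]
The left-hand side is the dimension of a vector space, hence a nonnegative integer. Multiplying by $N>0$ and rearranging gives
\[
(-1)^{N+1}\sum_{d|N}\mu(N/d)\,p_d(\underline{s})=N\dim_k V^N\ge 0,
\]
which is precisely~\eqref{ineq:free_alg}.
\end{proof}
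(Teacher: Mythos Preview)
Your proof is correct and matches the paper's intended argument: the paper states Corollary~\ref{cor:ineq_free_alg} immediately after Lemma~\ref{lm:mobius_inversion} without a separate proof, treating it as the obvious consequence that $\dim_k V^N\ge 0$. Your write-up simply makes this explicit, and the additional remark about divisibility is a harmless bonus.
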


\begin{example}
For small values of $N$ Corollary~\ref{cor:ineq_free_alg} gives
\begin{enumerate}
\item[($N=1$)] $s_1\ge 0$;
\item[($N=2$)] $s_2\ge \frac{1}{2}s_1(s_1-1)$;
\item[($N=3$)] $s_3\ge \frac{1}{3}s_1(3s_2-s_1^2+1)$.
\end{enumerate}
More generally, for any $N$ inequality~\eqref{ineq:free_alg} is equivalent to a lower bound on $s_N$ of the form
\[
s_N\ge q_N(s_1,\dots,s_{N-1}),
\]
where $q_N$ is some polynomial of degree $N$, with $\deg s_i=i$.
\end{example}

\section{Toric topology}\label{sec:toric}

We begin this section by recalling the basic combinatorial notions of simplicial complexes, $f$-vectors and flag simplicial complexes.

\begin{definition}\label{def:simplicial}
An \emph{abstract simplicial complex} $\mc K$ on the set of vertices $[m]=\{1,\dots,m\}$ is a collection $\mc K=\{\sigma\}$ of subsets of $[m]$ such that for any $\sigma\in \mc K$ all subsets of $\sigma$ also belong to $\mc K$. Elements $\sigma\in\mc K$ are called simplices of $\mc K$. The \emph{dimension} of a simplex $\sigma\in \mc K$ is $|\sigma|-1$. We also assume that all one-element sets belong to $\mc K$, i.e., there are no `ghost vertices'.
\end{definition}

For an $n$-dimensional simplicial complex $\mc K$ let $f_i$ be the number of its $i$-dimensional simplices. Then the \emph{$f$-vector} of simplicial complex $\mc K$ is $(f_0,\dots,f_n)$.

\begin{definition}\label{def:flag}
Let $\mc K$ be a simplicial complex on the set of vertices $[m]$. A subset $\sigma\subset[m]$ is a \emph{minimal non-face} of $\mc K$ if $\sigma\not\in\mc K$, but all proper subsets $\sigma'\subsetneq\sigma$ are simplices of $\mc K$: $\sigma'\in\mc K$.

Simplicial complex $\mc K$ is \emph{flag} if all its minimal non-faces are two-element subsets of $[m]$. Clearly, any flag simplicial complex is determined by its 1-skeleton. 
\end{definition}

Now we describe the construction originating from \emph{toric topology}~\cite{bu-pa-15}. This construction associates to any simplicial complex $\mc K$ and a pair of topological spaces $(X,A)$ a topological space $(X,A)^\mc K$. The main idea is that `topology' of $(X,A)^\mc K$ is somehow captures `combinatorics' of $\mc K$.

\begin{definition}[Polyhedral product]\label{def:dj}
Let $\mc K$ be an abstract simplicial complex on the set of vertices $[m]$. Let $(X, A)$ be a pair of topological spaces. The \emph{polyhedral product} associated to $\mc K$ is a topological space $(X,A)^\mc K\subset X^m$ defined as follows. For any subset $\sigma\subset [m]$ let us denote a `building block' inside $X^m$:
\[
(X,A)^\sigma:=\prod_{i\in \sigma} X\times \prod_{i\not\in\sigma} A\subset X^m.
\]
Then by definition $(X,A)^\mc K$ is the union of the building blocks $(X,A)^\sigma$, where $\sigma$ runs over simplices of $\mc K$:
\[
(X,A)^\mc K:=\bigcup_{\sigma\in\mc K}(X,A)^\sigma.
\]
\end{definition}

\begin{example}\label{ex:def_zk_dj}
Let us consider a pair $(D^2,S^1)$, where $D^2$ is the unit disc and $S^1$ is its boundary. The corresponding $\mc K$-power is called the \emph{moment-angle-complex} and denoted by 
\[
Z(\mc K):=(D^2,S^1)^\mc K.
\]

Starting with the pair $(\C P^\infty, pt)$, we get the \emph{Davis-Januszkiewicz space} 
\[
DJ(\mc K)=(\C P^\infty, pt)^\mc K.
\] 
It is known (see~\cite[Theorem\,4.3.2]{bu-pa-15}) that the moment-angle-complex $Z(\mc K)$ is a homotopy fibre of the inclusion $DJ(\mc K)\to (\C P^\infty)^m$. In particular, the higher homotopy groups of $Z(\mc K)$ and $DJ(\mc K)$ coincide:
\[
\pi_i(Z(\mc K))\simeq \pi_i(DJ(\mc K)),\quad i\ge 3.
\]
%
\end{example}

\begin{remark}
The crucial property of Davis-Januszkiewicz space is that for any simplicial complex $\mc K$ the cohomology ring of $DJ(\mc K)$ is the face ring:
\[
H^*(DJ(\mc K); k)\simeq k[\mc K]:=k[v_1,\dots,v_m]/\mc I_{SR}(\mc K), \deg v_i=2,
\] 
where $\mc I_{SR}$ is the ideal generated by monomials corresponding to all non-simplices of $\mc K$: 
\[
\mc I_{SR}=\{v_{i_1}\dots v_{i_k}|\ \sigma=\{i_1,\dots,i_k\}\not\in\mc K\}.
\]
\end{remark}

The following result is due to Panov and Ray. 
\begin{proposition}{\cite[Prop.\,9.5]{pa-ra-08}}\label{prop:pa-ra}
For any flag simplicial complex $\mc K$, we have
\[
h\bigl(H^\bullet(\Omega DJ(\mc K); \Q); t\bigr)=h(\Q[\mc K], (-t)^{1/2})^{-1}=
\Biggl(1+ \sum_{i=0}^{\dim \mc K} (-1)^{i+1}f_i\frac{t^{i+1}}{(1+t)^{i+1}} \Biggr)^{-1}.
\]
\end{proposition}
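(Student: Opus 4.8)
The plan is to realize $H^\bullet(\Omega DJ(\mc K);\Q)$ as a $\mathrm{Tor}$-algebra over the face ring and to exploit that, for flag $\mc K$, this face ring is \emph{Koszul}. Two inputs about $DJ(\mc K)$ are needed: it is simply connected, and it is formal over $\Q$ (its rational homotopy type is detected by $H^\bullet(DJ(\mc K);\Q)=\Q[\mc K]$, generated in degree $2$). Granting these, the Eilenberg--Moore spectral sequence of the path--loop fibration $\Omega DJ(\mc K)\to P\,DJ(\mc K)\to DJ(\mc K)$ collapses at its $E_2$-page (equivalently, the cobar construction on $\Q[\mc K]$ computes $H^\bullet(\Omega DJ(\mc K);\Q)$), producing an isomorphism of graded vector spaces $H^\bullet(\Omega DJ(\mc K);\Q)\cong \mathrm{Tor}_{\Q[\mc K]}(\Q,\Q)$ in which a class of homological degree $s$ and internal degree $j$ sits in topological degree $j-s$. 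Thus $h(H^\bullet(\Omega DJ(\mc K);\Q);t)=\sum_{s,j}\dim_\Q\mathrm{Tor}^{\Q[\mc K]}_{s,j}(\Q,\Q)\,t^{\,j-s}$.

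Next I would invoke Koszulness. Since $\mc K$ is flag, all minimal non-faces have two elements, so the Stanley--Reisner ideal $\mc I_{SR}(\mc K)$ is generated by quadratic monomials; hence $\Q[\mc K]$, regraded so that $\deg v_i=1$, is a quadratic monomial algebra, and a quadratic monomial ideal is already a (quadratic) Gr\"obner basis for itself, so $\Q[\mc K]$ is Koszul (this is Fr\"oberg's observation). Koszulness means $\mathrm{Tor}^{\Q[\mc K]}_{s,j}(\Q,\Q)=0$ unless $j=2s$ in the original degree-$2$ grading, so the double sum above collapses onto its diagonal:
\[
h\bigl(H^\bullet(\Omega DJ(\mc K);\Q);t\bigr)=\sum_{s\ge 0}\dim_\Q\mathrm{Tor}^{\Q[\mc K]}_{s,2s}(\Q,\Q)\,t^{s}.
\]

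To recognise the right-hand side of the proposition, take Euler characteristics in a minimal free resolution of $\Q$ over $A:=\Q[\mc K]$: the $s$-th term is $A\otimes_\Q\mathrm{Tor}^A_s(\Q,\Q)$, so the alternating sum of Poincar\'e series equals $h(\Q;u)=1$, giving $h(A;u)\cdot\sum_{s,j}(-1)^s\dim_\Q\mathrm{Tor}^A_{s,j}(\Q,\Q)\,u^{\,j}=1$. Restricting to $j=2s$ by Koszulness and substituting $u=(-t)^{1/2}$ (so that $u^{2s}=(-t)^s$) turns the second factor into $\sum_{s}\dim_\Q\mathrm{Tor}^A_{s,2s}(\Q,\Q)\,t^{s}$, which by the previous paragraph equals $h(H^\bullet(\Omega DJ(\mc K);\Q);t)$; hence $h(H^\bullet(\Omega DJ(\mc K);\Q);t)=h(\Q[\mc K],(-t)^{1/2})^{-1}$. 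The second equality in the statement is then just the classical Hilbert-series formula for a Stanley--Reisner ring: with $\deg v_i=2$ one has $h(\Q[\mc K];u)=1+\sum_{i=0}^{\dim\mc K}f_i\,u^{2(i+1)}/(1-u^2)^{i+1}$, and putting $u^2=-t$ yields $1+\sum_{i=0}^{\dim\mc K}(-1)^{i+1}f_i\,t^{i+1}/(1+t)^{i+1}$.

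The main obstacle is the first step: the collapse of the Eilenberg--Moore spectral sequence rests on the formality of $DJ(\mc K)$, and one must track the homological-versus-internal bigrading carefully so that the loop-space degree shift $j\mapsto j-s$ is applied correctly (a quick sanity check is $\mathrm{Tor}^{\Q[\mc K]}_{1,2}(\Q,\Q)=\langle v_1,\dots,v_m\rangle$, of dimension $f_0$, contributing to $H^{1}(\Omega DJ(\mc K);\Q)$, which matches the $t$-linear term of the explicit formula). Koszulness of flag face rings is standard, and once formality and Koszulness are in hand the remainder is a formal manipulation of power series.
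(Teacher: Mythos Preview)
The paper does not prove this proposition; it quotes it verbatim from Panov--Ray \cite{pa-ra-08} and uses it as a black box. Your argument is correct and is, in outline, exactly the argument Panov and Ray give: formality of $DJ(\mc K)$ forces the Eilenberg--Moore spectral sequence to collapse, identifying $H^\bullet(\Omega DJ(\mc K);\Q)$ with $\mathrm{Tor}_{\Q[\mc K]}(\Q,\Q)$; Fr\"oberg's theorem that flag face rings are Koszul then concentrates $\mathrm{Tor}$ on the diagonal and turns the Euler-characteristic identity $h(A;u)\cdot\sum_{s,j}(-1)^s\dim\mathrm{Tor}_{s,j}^{A}(\Q,\Q)\,u^{j}=1$ into the stated reciprocity; the explicit right-hand side is the standard Stanley--Reisner Hilbert series after the substitution $u^2=-t$. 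Your bigrading bookkeeping and the sanity check at $s=1$ are fine.
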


Let $X$ be an arbitrary simply connected pointed $CW$-complex. Its loop space $\Omega X$ is an $H$-space, hence its rational cohomology is a Hopf algebra. At the same time, any Hopf algebra over a field of characteristic zero is a free graded algebra, see~\cite{du-fo-no-90}:
\[
H^\bullet(\Omega X,\Q)\simeq S^\bullet(V),
\]
where $V=V^\bullet$ is a graded vector space spanned by primitive elements in Hopf algebra $H^\bullet(\Omega X,\Q)$. Alternatively, $V^\bullet$ could by described as the dual to $\pi_\bullet(\Omega X)\simeq \pi_{\bullet}(X)[-1]$, where $[-1]$ stands for the grading shift, see, e.g.,~\cite{fe-op-ta-08}. This observation together with remark in Example~\ref{ex:def_zk_dj} explain that the following slightly reformulated result due to Denham and Suciu contains essentially the same information as Proposition~\ref{prop:pa-ra}:
\begin{theorem}{\cite[Theorem\,4.2.1]{de-su-07}}
Let $\mc K$ be a flag complex with the face ring $k[\mc K]$. Then the ranks $\pi_i=\pi_i(Z(\mc K))$ of the homotopy groups of the moment-angle complex $Z(\mc K)$ are given by
\[
\prod_{r=1}^\infty 
\frac{
	(1+t^{2r-1})^{\pi_{2r}}
}
{
	(1-t^{2r})^{\pi_{2r+1}}
} =
h({\rm Tor}(\Q[\mc K], \Q), (-t)^{1/2}, -(-t)^{1/2})^{-1},
\]
where $h(A^\bullet, t_1, t_2)$ is the bigraded Poincar\'e series of $\rm Tor$-algebra.
\end{theorem}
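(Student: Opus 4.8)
The plan is to deduce this equality of power series from Proposition~\ref{prop:pa-ra} of Panov--Ray together with the Hopf-algebra structure theorem recalled just above, by computing both sides of the asserted identity in closed form in terms of the Hilbert series $h(\Q[\mc K];t)$ of the face ring and matching them.

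First I would set up the loop-space side. Since $\C P^\infty$ is simply connected and of finite type, so is $DJ(\mc K)=(\C P^\infty,pt)^{\mc K}$; hence $\Omega DJ(\mc K)$ is a connected $H$-space of finite type and $H^\bullet(\Omega DJ(\mc K);\Q)$ is a connected graded-commutative Hopf algebra over $\Q$. By the structure theorem (\cite{du-fo-no-90}, see also \cite{fe-op-ta-08}) it is a free graded algebra $S^\bullet(V)$, with $V^\bullet$ the graded dual of the rational homotopy Lie algebra, so $\dim_\Q V^i=\mathrm{rk}\,\pi_i(\Omega DJ(\mc K))=\mathrm{rk}\,\pi_{i+1}(DJ(\mc K))$. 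Now the inclusion $DJ(\mc K)\hookrightarrow(\C P^\infty)^m$ induces the quotient $H^\bullet((\C P^\infty)^m)=\Q[v_1,\dots,v_m]\twoheadrightarrow\Q[\mc K]$, which is an isomorphism in degree $2$; hence $\pi_2(DJ(\mc K))\xrightarrow{\sim}\pi_2((\C P^\infty)^m)\cong\Z^m$, and the long exact homotopy sequence of the fibration $Z(\mc K)\to DJ(\mc K)\to(\C P^\infty)^m$ from Example~\ref{ex:def_zk_dj} gives $\pi_1(Z(\mc K))=\pi_2(Z(\mc K))=0$. Together with $\pi_i(Z(\mc K))\cong\pi_i(DJ(\mc K))$ for $i\ge 3$ this yields $\dim_\Q V^1=m=f_0$ and $\dim_\Q V^i=\pi_{i+1}$ for $i\ge 2$ (in the notation of the statement). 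Feeding these dimensions into Proposition~\ref{prop:poincare_free}, and using $\pi_2=0$ to rewrite the resulting product over $r\ge 2$ as a product over $r\ge 1$, one obtains
\[
h\bigl(H^\bullet(\Omega DJ(\mc K);\Q);t\bigr)=(1+t)^{f_0}\prod_{r\ge 1}\frac{(1+t^{2r-1})^{\pi_{2r}}}{(1-t^{2r})^{\pi_{2r+1}}}.
\]
Since the left side equals $h(\Q[\mc K];(-t)^{1/2})^{-1}$ by Proposition~\ref{prop:pa-ra}, the left-hand side of the theorem equals $\bigl[(1+t)^{f_0}\,h(\Q[\mc K];(-t)^{1/2})\bigr]^{-1}$.

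Next I would treat the $\mathrm{Tor}$ side. Write $R=\Q[v_1,\dots,v_m]$ with $\deg v_i=2$, so $m=f_0$, and take a minimal free resolution $\cdots\to F_1\to F_0\to\Q[\mc K]\to 0$ of graded $R$-modules with $F_p=\bigoplus_j R(-j)^{\beta_{p,j}}$ and $\beta_{p,j}=\dim_\Q\mathrm{Tor}^{-p,j}_R(\Q[\mc K],\Q)$; only even $j$ occur, since $R$ and $\Q[\mc K]$ live in even degrees. Exactness together with $h(R(-j);s)=s^j(1-s^2)^{-m}$ gives the Euler-characteristic identity $\sum_{p,j}(-1)^p\beta_{p,j}\,s^j=(1-s^2)^m\,h(\Q[\mc K];s)$. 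With the natural bigrading of $\mathrm{Tor}_R(\Q[\mc K],\Q)\cong H^\bullet(Z(\mc K))$ — by homological degree $p$ and by total cohomological degree $j-p$ — the substitution $t_1=(-t)^{1/2}$, $t_2=-(-t)^{1/2}$ forces $t_2=-t_1$ and so collapses the bigraded Poincar\'e series to $\sum_{p,j}(-1)^p\beta_{p,j}(-t)^{j/2}$; by the identity above evaluated at $s=(-t)^{1/2}$ this equals $(1-(-t))^m h(\Q[\mc K];(-t)^{1/2})=(1+t)^{f_0}h(\Q[\mc K];(-t)^{1/2})$. Taking reciprocals, the right-hand side of the theorem equals $\bigl[(1+t)^{f_0}\,h(\Q[\mc K];(-t)^{1/2})\bigr]^{-1}$ as well, which proves the identity. (The hypothesis that $\mc K$ be flag is used only through Proposition~\ref{prop:pa-ra}.)

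The genuine inputs — the Hopf-algebra structure theorem, Proposition~\ref{prop:pa-ra}, and the identification $H^\bullet(Z(\mc K))\cong\mathrm{Tor}_R(\Q[\mc K],\Q)$ — are all available, so the main obstacle I expect is bookkeeping with gradings and signs rather than a new idea. Two points need care: keeping the degree shift between $\pi_\bullet(\Omega DJ(\mc K))$ and $\pi_\bullet(Z(\mc K))$ straight — in particular recognising that the low-degree discrepancy between $Z(\mc K)$ and $DJ(\mc K)$ accounts for exactly the factor $(1+t)^{f_0}$ occurring on both sides — and checking that the bigrading of the $\mathrm{Tor}$-algebra used in \cite{de-su-07} is one for which the substitution $t_1=(-t)^{1/2}$, $t_2=-(-t)^{1/2}$ extracts the Euler characteristic of the resolution. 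Neither is deep, but both must be done carefully for the signs to close up.
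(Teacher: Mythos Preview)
The paper does not actually prove this theorem: it is quoted from \cite{de-su-07}, and the surrounding paragraph only remarks that, via the Hopf-algebra structure theorem and the identification $\pi_i(Z(\mc K))\cong\pi_i(DJ(\mc K))$ for $i\ge 3$, it ``contains essentially the same information as Proposition~\ref{prop:pa-ra}''. Your proposal is precisely a careful execution of this sketched equivalence: you compute the Poincar\'e series of $H^\bullet(\Omega DJ(\mc K);\Q)\cong S^\bullet(V)$ using Proposition~\ref{prop:poincare_free}, separate out the degree-one contribution $(1+t)^{f_0}$ coming from $\pi_2(DJ(\mc K))\cong\Z^m$, invoke Proposition~\ref{prop:pa-ra} for the left-hand side, and then match the right-hand side via the Euler-characteristic identity for a minimal free resolution of $\Q[\mc K]$ over the polynomial ring. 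The argument is correct, and your caveat about the bigrading convention on $\mathrm{Tor}$ is the right place to be careful; once one fixes the convention so that the specialization $(t_1,t_2)=\bigl((-t)^{1/2},-(-t)^{1/2}\bigr)$ produces the alternating sum $\sum_{p,j}(-1)^p\beta_{p,j}(-t)^{j/2}$, everything closes up as you describe. In short, your proof fills in exactly the details the paper leaves implicit, and there is nothing further in the paper to compare against.
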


\section{Proof of the main result}\label{sec:main}
Now we are ready to prove our main result:
\begin{theorem}\label{thm:main}
Let $\mc K$ be a flag complex with $f$-vector $(f_0,\dots,f_n)$.  Then the coefficients of the power series $\Biggl(1+ \sum_{i=0}^{\dim \mc K} (-1)^{i+1}f_i\frac{t^{i+1}}{(1+t)^{i+1}} \Biggr)^{-1}$ satisfy inequalities~\eqref{ineq:free_alg}. Specifically, for any $N\ge 1$ we have
\begin{equation}\label{ineq:main}
(-1)^{N}\sum_{d|N}\mu(N/d) (-1)^d p_d(\underline{\alpha})\ge 0,
\end{equation}
where $p_d$ is the $d$-th Newton polynomial expressed in elementary symmetric polynomials $\underline{\alpha}=(\alpha_1,\alpha_2,\dots)$ with
\[
\alpha_n:=\sum_{i=0}^{n-1} f_i\binom{n-1}{i}.
\]
\end{theorem}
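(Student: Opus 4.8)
The plan is to feed the Panov--Ray formula (Proposition~\ref{prop:pa-ra}) into Corollary~\ref{cor:ineq_free_alg}; the bulk of the work is to rewrite everything in terms of the $\alpha_n$'s and to keep track of signs. First I would expand the rational function appearing in Proposition~\ref{prop:pa-ra} as a power series in $t$: using $(1+t)^{-(i+1)}=\sum_{k\ge0}(-1)^k\binom{i+k}{k}t^k$ and collecting the coefficient of $t^m$, a short binomial manipulation gives
\[
h\bigl(\Q[\mc K],(-t)^{1/2}\bigr)=1+\sum_{i=0}^{n}(-1)^{i+1}f_i\frac{t^{i+1}}{(1+t)^{i+1}}=\sum_{m\ge0}(-1)^m\alpha_m t^m,
\]
where $\alpha_0:=1$ and $\alpha_m=\sum_{i=0}^{m-1}f_i\binom{m-1}{i}$ for $m\ge1$. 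Call this series $Q(t)$.

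As recalled in Section~\ref{sec:toric}, the rational cohomology $H^\bullet(\Omega DJ(\mc K);\Q)$ of the loop space is a connected, commutative, locally finite-dimensional graded Hopf algebra over a field of characteristic zero, hence a free graded algebra $S^\bullet(V)$, and by Proposition~\ref{prop:pa-ra} its Poincar\'e series equals the reciprocal series $P(t):=Q(t)^{-1}=1+s_1t+s_2t^2+\dots$. Corollary~\ref{cor:ineq_free_alg} now applies to $\underline s=\{s_1,s_2,\dots\}$ and yields $(-1)^{N+1}\sum_{d|N}\mu(N/d)p_d(\underline s)\ge0$ for all $N\ge1$; it remains only to re-express $p_d(\underline s)$ in terms of the $\alpha_n$'s.

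For the translation I would use the formal identity established inside the proof of Lemma~\ref{lm:mobius_inversion}: for any power series with constant term $1$ and coefficient sequence $\underline a=\{a_1,a_2,\dots\}$ one has $\log(1+a_1t+a_2t^2+\dots)=\sum_N(-1)^{N+1}N^{-1}p_N(\underline a)t^N$. Applying it to $P(t)$ and to $Q(t)=1+\tilde s_1t+\tilde s_2t^2+\dots$ (so $\tilde s_m=(-1)^m\alpha_m$), and using $\log P+\log Q=\log(PQ)=0$, one gets $p_N(\underline s)=-p_N(\underline{\tilde s})$ for every $N$. Since the $i$-th entry of the argument of $p_N$ plays the role of the $i$-th elementary symmetric polynomial, the reflection $\gamma_j\mapsto-\gamma_j$ --- which multiplies the $i$-th elementary symmetric polynomial by $(-1)^i$ and the $N$-th power sum by $(-1)^N$ --- shows $p_N(\underline{\tilde s})=(-1)^N p_N(\underline\alpha)$, hence $p_N(\underline s)=(-1)^{N+1}p_N(\underline\alpha)$. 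Substituting this into the inequality of Corollary~\ref{cor:ineq_free_alg} and using $(-1)^{N+1}(-1)^{d+1}=(-1)^N(-1)^d$ gives exactly~\eqref{ineq:main}. The one genuinely delicate point is the bookkeeping of the three sign changes --- the substitution $s\mapsto(-t)^{1/2}$ in the face-ring Hilbert series, the passage to the reciprocal $Q\mapsto Q^{-1}$, and the reflection $\gamma_j\mapsto-\gamma_j$; as a sanity check, for $N=2$ the resulting inequality reads $\alpha_2\le\tfrac12\alpha_1(\alpha_1+1)$, i.e.\ $f_1\le\binom{f_0}{2}$, which is the evident bound on the number of edges of the graph whose clique complex is $\mc K$.
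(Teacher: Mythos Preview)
Your proposal is correct and follows essentially the same route as the paper: invoke Proposition~\ref{prop:pa-ra} to recognize the series as the Poincar\'e series of a free graded algebra, apply Corollary~\ref{cor:ineq_free_alg}, expand the rational function to identify the coefficients $(-1)^m\alpha_m$, and then use the homogeneity/reflection $\gamma_j\mapsto-\gamma_j$ to convert $p_d(\underline s)$ into $(-1)^{d+1}p_d(\underline\alpha)$. The only cosmetic difference is that the paper calls the reciprocal series $Q(t)$ and performs the sign bookkeeping inside a single logarithm computation rather than via your $\log P+\log Q=0$ step, but the two arguments are line-for-line equivalent.
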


\begin{proof}
The fact that the coefficients of a power series $Q(t):=\Biggl(1+ \sum_{i=0}^{\dim \mc K} (-1)^{i+1}f_i\frac{t^{i+1}}{(1+t)^{i+1}} \Biggr)^{-1}$ satisfy inequalities~\eqref{ineq:free_alg} immediately follows from the fact that this power series is the Poincar\'e series of a free graded algebra $H^*(\Omega DJ(\mc K),\Q)$ and from Corrolary~\ref{cor:ineq_free_alg}.

To find the explicit form of these inequalities we follow the proof of Lemma~\ref{lm:mobius_inversion} with slight modifications. Namely, we first rewrite $Q(t)$ as follows
\[
Q(t)=\Biggl(\sum_{n=0}^\infty \sum_{j=0}^{n-1} f_j\binom{n-1}{j}(-t)^n\Biggr)^{-1}.
\]
Now we take logarithm of the identity
\[
Q(t)=\prod_{r=1}^\infty 
\frac{
	(1+t^{2r-1})^{v_{2r}}
}
{
	(1-t^{2r})^{v_{2r+1}}
}
\]
and comparing coefficients at $t^N$ as in the proof of Lemma~\ref{lm:mobius_inversion} we find that
\[
\sum_{i|N}(-1)^iiv_i=-p_N(\underline{\{(-1)^k\alpha_k\}}^\infty_{k=1})=(-1)^{N+1} p_N(\underline{\alpha}).
\]
Applying again the M\"obius inversion formula we get the identity
\[
v_N=\frac{(-1)^N}{N}\sum_{d|N}\mu(N/d) (-1)^d p_d(\underline{\alpha}),
\]
which implies the stated inequality.
\end{proof}

\begin{example}
For small values of $N$ Theorem~\ref{thm:main} gives:
\begin{enumerate}
\item[($N=1$)] Inequality~\eqref{ineq:main} reads $p_1\ge 0$. Plugging $p_1=f_0$, we get 
\[
f_0\ge 0;
\]
\item[($N=2$)] Inequality~\eqref{ineq:main} reads $p_2+p_1\ge 0$. Plugging $p_2=\alpha_1^2-2\alpha_2$ with $\alpha_1=f_0$, $\alpha_2=f_0+f_1$, we get 
\[
f_1\le\binom{f_0}{2};
\]
\item[($N=3$)] Inequality~\eqref{ineq:main} reads $p_3-p_1\ge 0$. With $p_3=\alpha_1^3-3\alpha_1\alpha_2+3\alpha_3$, $\alpha_1=f_0$, $\alpha_2=f_0+f_1$, $\alpha_3=f_0+2f_1+f_2$ we arrive at
\[
f_0^3-3f_0(f_0+f_1)+3f_2-f_0\ge 0,
\]
or, equivalently,
\[
f_2\ge \binom{f_0}{3}-(f_0-2)\biggl(\binom{f_0}{2}-f_1\biggr).
\]
\end{enumerate}
The first two inequalities are trivially satisfied for any simplicial complex and simply indicate that the number of vertices is non-negative and the number of edges is bounded above by $\binom{f_0}{2}$. 

The third inequality is not satisfied for a general simplicial complex. It says that for a flag simplicial complex the number of non-triangles is less or equal then the number of non-edges times $(f_0-2)$. Indeed, each non-triangle contains at least one non-edge, while every non-edge is a side of $f_0-2$ non-triangles.
\end{example}

\begin{remark}
For a general $N$ inequality~\eqref{ineq:main} has form
\[
(-1)^Nf_N\ge q_N(f_1,\dots, f_{N-1}),
\]
where $q_N$ is a polynomial of degree $N$ with $\deg f_i=i+1$.
\end{remark}

It is interesting if inequalities~\eqref{ineq:main} has a simple combinatorial interpretation:

\begin{question}
Does there exist a direct combinatorial proof of inequalities~\eqref{ineq:main} for all $N$?
\end{question}

\section{Acknowledgements}

I am grateful to Taras Panov for fruitful discussions and many useful remarks.

\bibliographystyle{siam}
\bibliography{biblio}

\end{document}